\newtheorem{thm}{Theorem}[section]
\newcommand{\bt}{\begin{thm}}
\newcommand{\et}{\end{thm}}
\newtheorem{cor}[thm]{Corollary}   
\newcommand{\bc}{\begin{cor}}
\newcommand{\ec}{\end{cor}}
\newtheorem{lem}[thm]{Lemma}   
\newcommand{\bl}{\begin{lem}}
\newcommand{\el}{\end{lem}}
\newtheorem{prop}[thm]{Proposition}
\newcommand{\bp}{\begin{prop}}
\newcommand{\ep}{\end{prop}}
\newtheorem{defn}[thm]{Definition}
\newcommand{\bd}{\begin{defn}}    
\newcommand{\ed}{\end{defn}}
\newtheorem{rmrk}[thm]{Remark}   
\newcommand{\br}{\begin{rmrk}}
\newcommand{\er}{\end{rmrk}}
\newtheorem{example}[thm]{Example}
\newtheorem{question}[thm]{Question}
\newcommand{\mina}{\operatorname{MinA}}
\newcommand{\Scal}{\operatorname{Scalar}}
\newcommand{\be}{\begin{equation}}
 \newcommand{\ee}{\end{equation}}
\newcommand{\R}{\mathbb{R}}
\newcommand{\diam}{\operatorname{Diam}}
\newcommand{\vol}{\operatorname{Vol}}
\newcommand{\Sph}{{\mathbb S}}         
\newcommand{\scal}{\text{Scal}}
\pgfplotsset{compat=1.18}
\begin{document}

\title[More Extreme Limits]{More Extreme Limits of Manifolds with Positive Scalar Curvature}

\author{Wenchuan Tian}
\address[Wenchuan Tian]{University of California, Santa Barbara}
\email{tian.wenchuan@gmail.com}

\date{}

\keywords{}
\begin{abstract}
In this article, we extend the example constructed in \cite{STW} to build new examples that satisfy the assumptions of the conjecture by Gromov. Each of these new examples of sequence converges to a limit space with infinitely many poles in $\Sph^2$. These examples can be used to test various notions of weak scalar curvature.
\end{abstract}

\maketitle

\section{\bf Introduction}

In \cite{Gromov-Dirac} and \cite{Gromov-Plateau}, Gromov conjectured that if a sequence of Riemannian manifolds have a non-negative scalar curvature and are subject to a uniform volume upper bound and a uniform diameter upper bound, then there exists a subsequence that converges in some weak sense to a limit space. Moreover, this limit space would have a generalized notion of ``non-negative scalar curvature".

Several works explored various possibilities for how to characterize non-negative generalized scalar curvature. 

Firstly, in \cite{B-G-GAFA}, Burkhardt-Guim uses Ricci flow to define the notion of a scalar curvature lower bound in the $\beta-$weak sense. See also \cite{B-G-SIGMA-survey}. Burkhardt-Guim's work extends earlier work by Gromov \cite{Gromov-Dirac} and Bamler \cite{Bamler-Gromov}. Huang and Lee \cite{Huang-Lee-scalar} generalize Bamler's proof in another direction. 

Secondly, in \cite{Lee-LeFloch}, Lee and LeFloch defined the notion of scalar curvature in the sense of distribution for Riemannian metric tensors that are locally $L^\infty \cap W^{1,2}$ and have locally $L^\infty$ inverse. See also the work of LeFloch and Mardare \cite{LM07}. In \cite{JSZ23}, Jiang, Sheng, and Zhang proved that the distributional scalar curvature lower bound is preserved along Ricci flow when starting from a $W^{1,p}$ Riemannian metric tensor on an n-dimensional compact manifold for $p\in (n,\infty]$.

Thirdly, we can use the volume-limit notion to characterize scalar curvature or scalar curvature lower bounds. This notion is used by Basilio, Dodziuk, and Sormani \cite{BDS-Sewing}, Basilio and Sormani \cite{BS-seq}, and Kazaras and Xu \cite{KX-Drawstring} to show that the limit space has negative scalar curvature at some point.

Lastly, Li \cite{Li-poly} proved a comparison theorem for polyhedra in $3-$dimensional Riemannian manifolds with positive scalar curvature. We can use Riemannian Polyhedral comparison to characterize lower bounds on scalar curvature in closed $3$-dimensional Riemannian manifolds with positive scalar curvature.

Previously, the author worked with several collaborators to explore Gromov's conjecture with the additional $\mina$ lower bound. In \cite{Park-Tian-Wang-18}, the author worked with Jiewon Park and Changliang Wang to confirm Gromov's conjecture under the additional $\mina$ lower bound for sequences of rotationally symmetric Riemannian manifolds. In \cite{Tian-Wang}, the author worked with Changliang Wang to explore Gromov's conjecture under the additional $\mina$ lower bound for sequences of warped product $\Sph^2\times \Sph^1$, we prove that the sequence of Riemannian metric converges to a limit Riemannian metric that is $W^{1,p}$ for all $p\in [1,2)$. We also prove that the limit space has non-negative distributional scalar curvature as defined in \cite{Lee-LeFloch}.

In \cite{STW}, the author collaborated with Christina Sormani and Changliang Wang to construct a sequence of warped product $\Sph^2\times \Sph^1$ that satisfies the assumptions in Gromov's conjecture and also satisfies the uniform $\mina$ lower bound. When the limit is taken, the resulting space is an extreme limit space with two poles located at the north and south poles of the standard $\Sph^2$. This example shows that the regularity result in Theorem 1.3 and Theorem 1.7 of \cite{Tian-Wang} is sharp.

In this paper, we construct three different sequences of warped product $\Sph^2\times  \Sph^1$ in Example \ref{Example-Case1}, \ref{Example-Case2}, and \ref{Example-Case3}. These examples of sequences satisfy the assumptions in Gromov's conjecture, and each of these sequences converges to an extreme limit space with infinitely many poles in $\Sph^2$. The sequence in Example \ref{Example-Case2} even converges to a limit space with infinitely many poles that are dense in $\Sph^2$. 

We start with the definition of the following function
\begin{defn}\label{Defn-functions}
Let $a>0$ and $b\geq 2$. Define $f_{a,b}:[0,\pi]\to [2,\infty)$ by
\be
f_{a,b}(r)= \ln\left(\frac{1+a}{\sin^2 r+a}\right)+b. 
\ee
\end{defn}

Previously, in \cite{STW}, we used this function to construct an example of a sequence. We replace the variable $r$ with the standard $\Sph^2$ distance from a pole in that example. We proved that this function is smooth on $\Sph^2$ and defines a smooth Riemannian metric tensor that is a warped product metric in $\Sph^2\times \Sph^1$. The function $f_{a,b}(r)$ increases monotonically as $a$ decreases, and it converges to $f_{\infty}(r)=-2\ln(\sin r)+b$ as $a\to 0$. Hence the sequence of warped product metric converges to an extreme limit space that has two $\Sph^1$ fibers that stretch to infinite at the two poles in the standard $\Sph^2$.

In this paper, we construct three examples in which fibers stretch to infinite at a collection of points that are countably infinite. These examples are special cases of a general class of example in Definition \ref{General-Case}. To do so, we choose a countable collection of points $\{x_{ij}\}_{i,j=1}^\infty$ in $\Sph^2$ and sum over functions as in Definition \ref{Defn-functions} where for each function, we replace $r$ by $r_{ij}$, the $\Sph^2$ distance to $x_{ij}$.

In the following, we define a class of examples using the function in Definition \ref{Defn-functions}.
\begin{defn}\label{Sub-General-Case}
For $j=1,\ 2,...$, choose $a_j>0$ such that $a_{j+1}\leq a_{j}$ and that $a_j\to 0$ as $j\to \infty$. Let $\bar{K}\in [2,\infty)$, choose a sequence $\{b_i\}_{i=1}^\infty$ such that $b_i\in [2,\bar{K}]$ for all $i$. Let $K>0$, choose a sequence $\{A_i\}_{i=1}^\infty$ such that $A_i\geq 0$ for all $i$, $A_1>0$, and that $\sum_{i=1}^\infty A_{i}  =K <\infty$. 

Let $\{x_{ij}\}_{i,j=1}^\infty$ be a countable collection of points in $\Sph^2$. Let 
\be
r_{ij}: \Sph^2\to [0,\pi] \textrm{ be }r_{ij}(x)=d_{\Sph^2}(x,x_{ij}).
\ee
For each $j$, let $\eta_j:\Sph^2\to \R^+$ be
\be
\eta_j(x)=\sum_{i=1}^j A_{i} f_{a_j,b_i}(r_{ij}(x)).
\ee 
Where $ f_{a_j,b_i}$ is as in Definition \ref{Defn-functions}.

Define a sequence of warped product metrics on $\Sph^2\times \Sph^1$ by
\be
g_j= g_{\Sph^2}+\eta_j^2 g_{\Sph^1}.
\ee
We use the notation $\Sph^2\times_{\eta_j}\Sph^1$ to denote $(\Sph^2\times\Sph^1, g_j)$.
\end{defn}

\begin{thm}\label{MT-New}
For the sequence $\Sph^2\times_{\eta_j}\Sph^1$ defined in Definition \ref{General-Case}, let $\scal_j$ denote the scalar curvature of each $\Sph^2\times_{\eta_j}\Sph^1$, then we have
\be
\scal_j\geq 0 \text{ for all }j.
\ee
Moreover, if we define $v=16\pi^2 A_1$, $V=2\pi K\left( 8\pi-4\pi\ln 4+4\pi \bar{K}\right)$,  and $D=4\pi+2\pi K\left( 8\pi-4\pi\ln 4+4\pi \bar{K}\right)$ then we have
\be
v\leq \vol(\Sph^2\times_{\eta_j}\Sph^1)\leq  V \text{ and }\diam(\Sph^2\times_{\eta_j}\Sph^1) \leq D \text{ for all }j.
\ee
After possibly passing to a subsequence, the sequence of Riemannian metric tensor $g_j$ converges in $L^q$ norm to a limit metric $g_\infty$ for all $q\in [1,\infty)$. The limit metric $g_\infty$ is in $W^{1,p}$ for all $[1,2)$. The Riemannian manifold $(\Sph^2\times \Sph^1, g_\infty)$ has a $W^{1,p}$ metric tensor with non-negative distributional scalar curvature as defined in \cite{Lee-LeFloch}.
\end{thm}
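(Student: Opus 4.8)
The plan is to reduce every claim in the theorem to a computation on the warped product $\Sph^2\times_{\eta_j}\Sph^1$, exploiting the explicit additive structure $\eta_j=\sum_{i=1}^j A_i f_{a_j,b_i}(r_{ij})$. I would organize the proof into four blocks: (1) the scalar curvature formula and its sign, (2) the uniform volume and diameter bounds, (3) the $L^q$ and $W^{1,p}$ convergence of the metric tensors, and (4) the non-negativity of the distributional scalar curvature of the limit.

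\medskip

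\emph{Scalar curvature.} For a warped product $g=g_{\Sph^2}+\eta^2 g_{\Sph^1}$ the scalar curvature has the standard form $\scal=\scal_{\Sph^2}-2\,\Delta_{\Sph^2}(\ln\eta)-2|\nabla_{\Sph^2}\ln\eta|^2$, or equivalently $\scal = 2 - 2\eta^{-1}\Delta_{\Sph^2}\eta$ (since $\Sph^1$ is flat and one-dimensional, the fiber contributes no intrinsic curvature). The first step is therefore to verify that $\Delta_{\Sph^2}\eta_j\le \eta_j$ pointwise. Because $\eta_j$ is a nonnegative linear combination (with coefficients $A_i\ge 0$) of terms $f_{a_j,b_i}(r_{ij})$, and because $f_{a,b}\ge 2$ with the known property from \cite{STW} that each single summand already yields nonnegative scalar curvature on its own warped product, I would reduce the inequality to the single-function estimate proved in \cite{STW} together with the fact that adding the constants $b_i\ge 2$ and superposing Laplacians preserves the sign. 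The key point to check carefully is that $\Delta_{\Sph^2} f_{a_j,b_i}(r_{ij})\le f_{a_j,b_i}(r_{ij})$ holds for each summand — this is essentially the content of the $\STW$ construction applied at the pole $x_{ij}$ — and then summing against $A_i\ge 0$ preserves it since $\sum A_i b_i$ contributes only to the right side.

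\medskip

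\emph{Volume and diameter bounds.} The volume is $\vol=\int_{\Sph^2}\eta_j\,d\mathrm{vol}_{\Sph^2}\cdot 2\pi$, so the lower bound $v=16\pi^2 A_1$ comes from $\eta_j\ge A_1 f_{a_j,b_1}\ge 2A_1$ on the first term (using $A_1>0$, $f\ge 2$), and the upper bound $V$ comes from bounding $\int_{\Sph^2} f_{a_j,b_i}\,d\mathrm{vol}_{\Sph^2}$ uniformly: since $\int_{\Sph^2}\ln\!\big((1+a)/(\sin^2 r+a)\big)\,d\mathrm{vol}\le \int_{\Sph^2}(-2\ln\sin r)\,d\mathrm{vol}$ is a finite constant (computable as $8\pi-4\pi\ln 4$) independent of $a$, together with $b_i\le \bar K$ giving the $4\pi\bar K$ contribution, and then summing $\sum A_i=K$. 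For the diameter, fibers have length $2\pi\eta_j$ which is controlled since $\eta_j$ is bounded in $L^\infty$ away from the poles but blows up at the $x_{ij}$; the diameter bound $D$ should follow by bounding distances through the $\Sph^2$ factor (diameter $\pi$, contributing $4\pi$ after accounting for path choices) plus the worst-case fiber excursion controlled by the same $V$-type integral bound, so I would realize $D$ as a sum of the $\Sph^2$ diameter contribution and the $V$ term.

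\medskip

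\emph{Convergence and distributional scalar curvature.} Since $a_j\downarrow 0$, the functions $f_{a_j,b_i}(r_{ij})$ increase monotonically, so $\eta_j$ converges pointwise to an explicit $\eta_\infty$; monotone convergence plus the uniform $L^1$ bound from the volume estimate gives $L^q$ convergence of $g_j\to g_\infty$ for every $q\in[1,\infty)$ (the metric components are polynomial in $\eta_j$, and dominated convergence applies once the $L^q$ integrability of the limit is checked via the logarithmic singularities, which are integrable to every power). The $W^{1,p}$ regularity for $p\in[1,2)$ is where the main obstacle lies: I would show $\nabla\eta_\infty\in L^p$ by examining the gradient of each $\ln(\sin^2 r_{ij}+a)$ term near its pole, where $|\nabla\eta_\infty|\sim 1/r_{ij}$, so $|\nabla\eta_\infty|^p$ is locally integrable on $\Sph^2$ exactly when $p<2$; the delicate part is controlling the \emph{superposition} of infinitely many such singularities simultaneously and showing the series of gradients converges in $L^p$, which uses $\sum A_i=K<\infty$ and the uniform bound $b_i\le\bar K$ to dominate the tail. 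Finally, for the distributional scalar curvature I would invoke the framework of \cite{Lee-LeFloch}: since $g_\infty\in W^{1,p}$ (hence $L^\infty\cap W^{1,2}$ fails at $p=2$ but the distributional definition still applies under the regularity established in \cite{Tian-Wang}), and since each $\scal_j\ge 0$ with $g_j\to g_\infty$ in the appropriate norm, the non-negativity passes to the limit by testing against nonnegative compactly supported functions and using lower semicontinuity of the distributional scalar curvature under $W^{1,p}$ convergence — exactly the argument already carried out in \cite{Tian-Wang} for the single-pole case, which I would adapt by localizing near each pole $x_{ij}$ and summing. The hardest step is the simultaneous control of infinitely many singularities in the $W^{1,p}$ convergence and the verification that the limiting distributional scalar curvature remains well-defined and nonnegative despite the poles being possibly dense (as in Example \ref{Example-Case2}).
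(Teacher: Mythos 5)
Your first two blocks follow the paper's own route almost exactly: the scalar curvature argument (the formula $\scal_j = 2 - 2\Delta\eta_j/\eta_j$, the single-summand estimate $\Delta f_{a_j,b_i}\le f_{a_j,b_i}$ from \cite{STW}, and linearity against $A_i\ge 0$) is Proposition \ref{Prop-Scal} combined with Lemma \ref{Lem-Single-Scal}, and your volume bounds, including the constants $16\pi^2A_1$ and $2\pi K(8\pi-4\pi\ln 4+4\pi\bar K)$, reproduce Lemma \ref{Lem-Single-Vol}, Lemma \ref{Lem-Vol}, and Proposition \ref{Prop-Vol}. One imprecision in the diameter step: the ``worst-case fiber excursion'' is \emph{not} controlled by any integral bound --- the fibers over the poles $x_{ij}$ have unbounded length as $a_j\to 0$. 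What the paper does (Proposition \ref{Prop-Diam}) is a pigeonhole: since $\int_{\Sph^2}\eta_j\,d\vol_{\Sph^2}\le KT$, there is a point $y_j$ with $\eta_j(y_j)\le KT$, and \emph{every} pair of points is routed through the single fiber over $y_j$ via Lemma \ref{Lem-Single-Diam} and the triangle inequality. Your phrasing suggests a pointwise control that is false; the argument only works because one gets to choose where to traverse the $\Sph^1$ direction.

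The genuine gap is in your convergence block. Your construction of the limit rests on the claim that, since $a_j\downarrow 0$, the functions $f_{a_j,b_i}(r_{ij})$ increase monotonically and $\eta_j$ converges pointwise to an explicit $\eta_\infty$. This presumes the poles are fixed, i.e.\ that $r_{ij}$ does not depend on $j$. But in Definition \ref{Sub-General-Case} the collection $\{x_{ij}\}$ is allowed to vary with $j$ --- Example \ref{Example-Case3} has poles moving along the equator --- so $\eta_j$ need not converge pointwise at all, there is no explicit $\eta_\infty$, and monotone convergence does not apply. This is precisely why Theorem \ref{MT-New} says ``after possibly passing to a subsequence'': the paper obtains convergence not by an explicit limit but abstractly, from the compactness theorems of \cite{Tian-Wang} via Corollary \ref{cor-conv}, whose applicability requires the uniform bounds of Theorem \ref{MT} \emph{plus} the observation in Remark \ref{rmk-mina} that the $\mina$ hypothesis of \cite{Tian-Wang} can be replaced by the uniform lower bound $\eta_j\ge 2A_1>0$ of Lemma \ref{Lem-Tensor-Nondegenerate}. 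You never address this hypothesis substitution, yet you also ultimately invoke \cite{Tian-Wang} for the distributional scalar curvature, so your proof of that final claim is incomplete even in the fixed-pole case. What your direct approach does prove --- a stronger $W^{1,p}$ convergence to an explicit limit, with no subsequence --- is valid only for the fixed-pole Examples \ref{Example-Case1} and \ref{Example-Case2}, and is exactly the content of the paper's Proposition \ref{Prop-Limit} in Section \ref{Sec-Lim}; it cannot stand in for a proof of Theorem \ref{MT-New} as stated.
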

This theorem is proved at the end of Section \ref{Sec-Examples} as a consequence of Proposition \ref{Prop-Inclusion}, Theorem \ref{MT}, and Corollary \ref{cor-conv}. Theorem \ref{MT} and Corollary \ref{cor-conv} applies to a larger class of examples to be defined in Definition \ref{General-Case}.

Next, we construct the three examples. We choose three different collections of points in $\Sph^2$ and define three different examples
\begin{example}[Case 1, a sequence of converging points, see Figure \ref{f-case1}]\label{Example-Case1}
Let $\{\bar{x}_i\}_{i=1}^\infty$ be a sequence of points in $\Sph^2$ such that $\bar{x}_{i}$ converges to some $ \bar{x}_\infty $ in $\Sph^2$. Let $x_{ij}=\bar{x}_{i}$ for all $i,\ j$. In this way, we define a sequence of warped product metrics in $\Sph^2\times \Sph^1$ as in Definition \ref{General-Case}. Note that this example does not depend on $j$.
\end{example}

\begin{figure}[h]
\centering
\includegraphics[width=13cm]{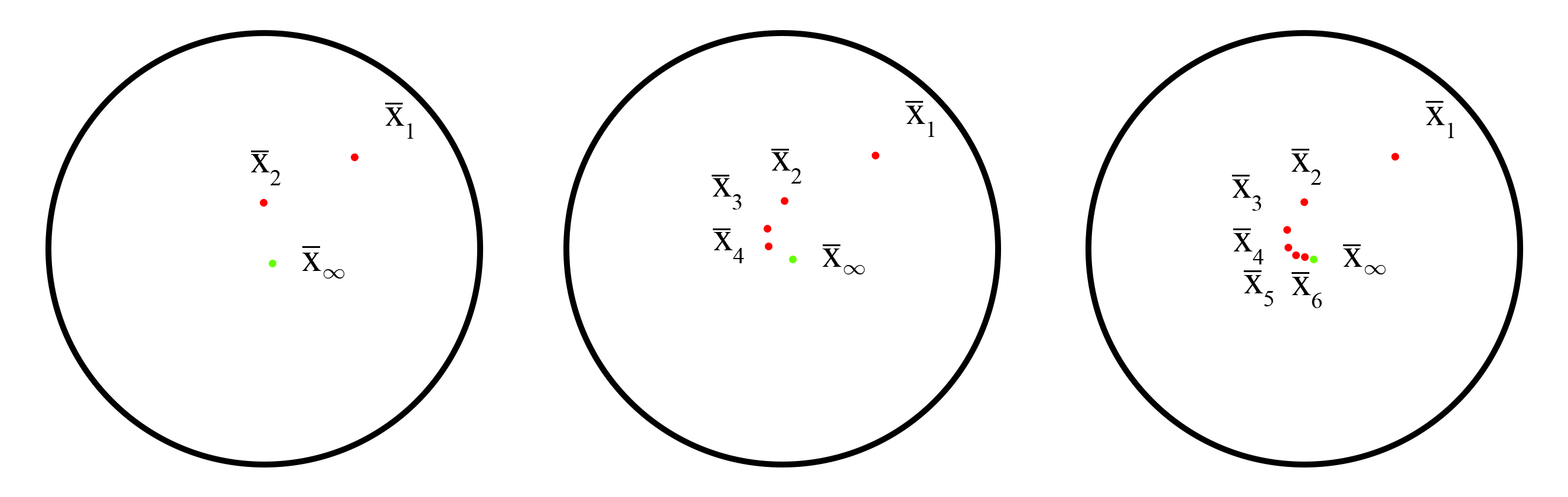}
\caption{The graph of Example \ref{Example-Case1}, a seuqnce of points in the standard $\Sph^2$ converging to $\bar{x}_\infty$. The graph on the left displays the first two points in the sequence $\{\bar{x}_i \}_{i=1}^\infty$ in red color. The graph in the middle displays the first four points in the sequence $\{\bar{x}_i \}_{i=1}^\infty$ in red color. The graph on the right displays the first 6 points in the sequence $\{\bar{x}_i \}_{i=1}^\infty$ in red color. The three graphs include the limit point $\bar{x}_\infty$ as a green dot.}
\label{f-case1}
\end{figure}

\begin{example}[Case 2, dense points in $\Sph^2$, see Figure \ref{f-case2}]\label{Example-Case2}
Choose a collection of countable dense points in $\Sph^2$. For example, we can consider a fixed polar coordinate $(r,\theta)$ in $\Sph^2$ and choose all the points with rational $r$ and $\theta$ value. Order this countable dense collection of points, and denote them as $\{\hat{x}_i\}_{i=1}^\infty$. Let $x_{ij}=\hat{x}_{i}$ for all $i,\ j$. In this way, we define a sequence of warped product metrics in $\Sph^2\times \Sph^1$ as in Definition \ref{General-Case}. Note that this example does not depend on $j$.
\end{example}

\begin{figure}[h]
\centering
\includegraphics[width=13cm]{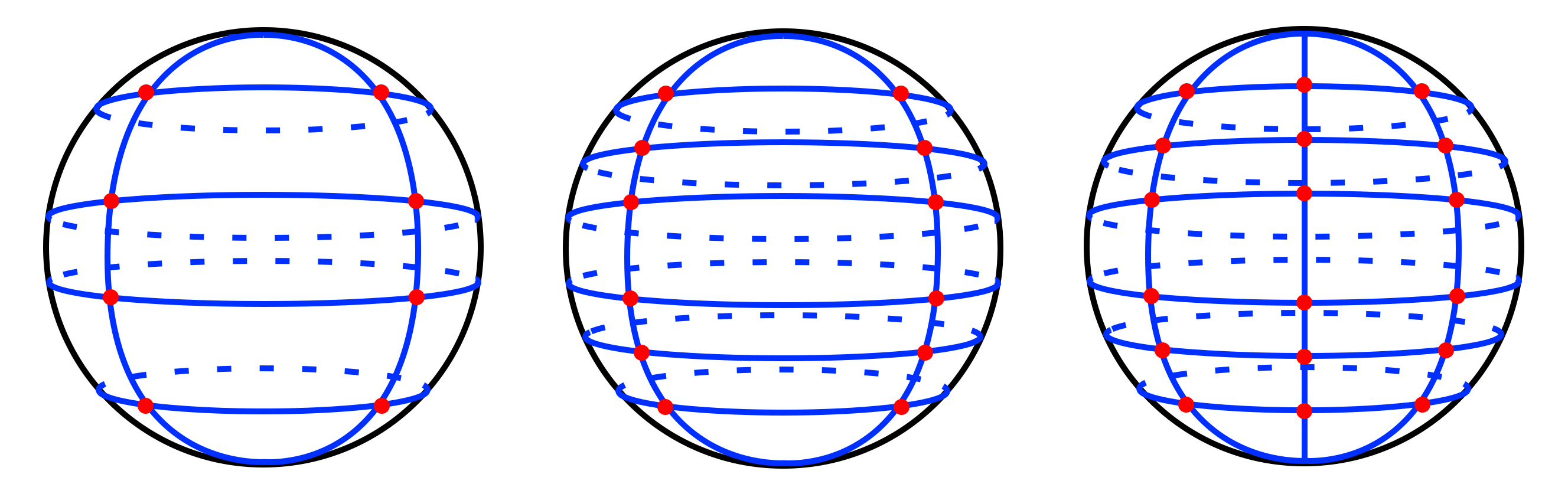}
\caption{The graph of Example \ref{Example-Case2}, a desnse collection of points in the standard $\Sph^2$. The blue line shows the longitude $\theta$ and latitude $r$ within a fixed polar coordinate. We show the chosen points in the front as red dots. The chosen points in the back are not shown in this graph. From left to right shows an increasing number of poles chosen at rational values of $r$ and $\theta$ in the fixed polar coordinate.}
\label{f-case2}
\end{figure}

\begin{example}[Case 3, moving points on the equator of $\Sph^2$, see Figure \ref{f-case3}]\label{Example-Case3}
Let $(\Sph^2, g_{\Sph^2})$ be the two-dimensional sphere with the standard metric such that $g_{\Sph^2}$ defines the distance function $d_{\Sph^2}$ in $\Sph^2$. Consider a fixed polar coordinate $(r,\theta)$ in $\Sph^2$ and choose the points on the equator such that for $i,\ j=1,\ 2,...$
\be
\tilde{x}_{ij}=(\pi/2, 2\pi i/j).
\ee
Let $x_{ij}=\tilde{x}_{ij}$ for all $i,\ j$. In this way, we define a sequence of warped product metrics in $\Sph^2\times \Sph^1$ as in Definition \ref{General-Case}.
\end{example}

\begin{figure}[h]
\centering
\includegraphics[width=13cm]{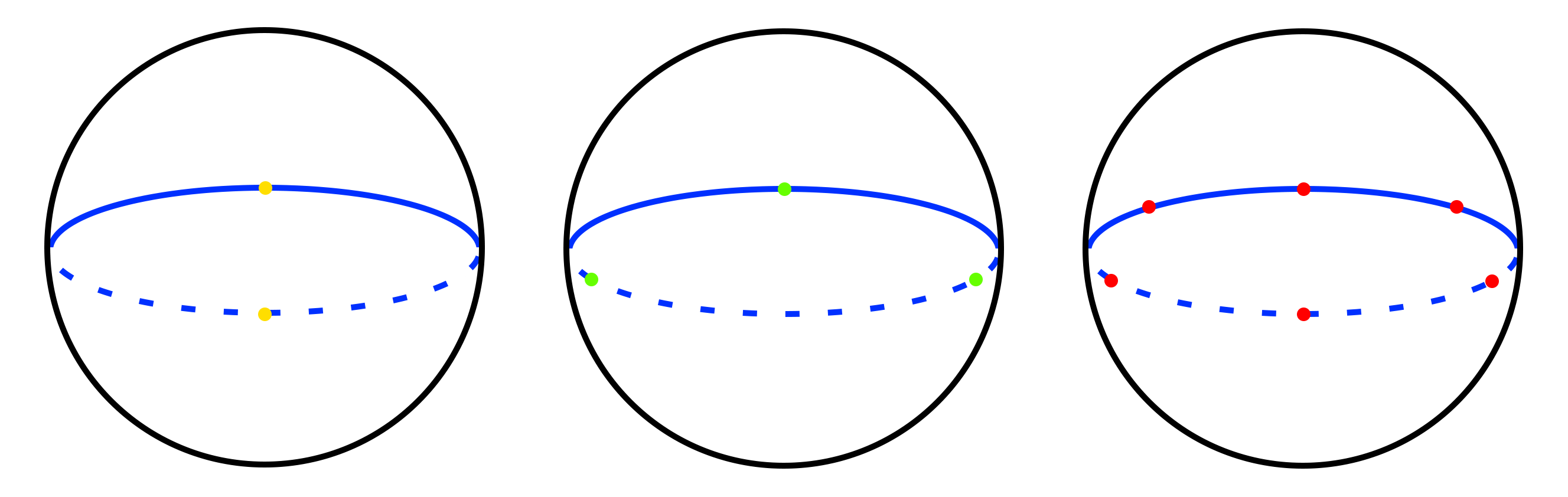}
\caption{The graph of Example \ref{Example-Case3}, a collection of moving points along the equator in $\Sph^2$. There are two chosen points in the left graph (shown in yellow), three chosen points in the middle graph (shown in green), and six chosen points in the right graph (shown in red).}
\label{f-case3}
\end{figure}

These examples can be used to test various notions of weak scalar curvature mentioned above.

This article is organized as follows. In Section \ref{Sec-Class}, we state Theorem \ref{MT}, which concerns a larger class of examples to be defined in Definition \ref{General-Case}. It asserts that the general class of examples in Definition \ref{General-Case} satisfies the assumptions in Gromov's conjecture. We explain in Remark \ref{rmk-mina} why we can apply the results from \cite{Tian-Wang} to the general class of examples as in Definition \ref{General-Case}. Then, in Corollary \ref{cor-conv}, we apply Theorem \ref{MT} and the results from \cite{Tian-Wang} to conclude the convergence of the Riemannian metric tensor and prove that the scalar curvature in the limit space is non-negative as a distribution as defined in \cite{Lee-LeFloch}. In Section \ref{Sec-Examples}, we prove in Propositions \ref{Prop-Inclusion} that the class of example in Definition \ref{Sub-General-Case} is encompassed within the general class Definition \ref{General-Case}. Then, at the end of Section \ref{Sec-Examples}, we prove Theorem \ref{MT-New}. In Section \ref{Sec-Lim}, we explicitly write down the formula for the limit metric tensor of the sequences presented in Example \ref{Example-Case1} and \ref{Example-Case2}, and prove $W^{1,p}$ convergence of the metric tensor. This is stronger than the convergence result in Corollary \ref{cor-conv}. In Section \ref{Sec-Thm}, we prove Theorem \ref{MT}. Lastly, in Section \ref{Sec-Scalar-Curvature}, we ask questions about how to characterize the scalar curvature in the limit space.

\section{\bf General Class of Examples}\label{Sec-Class}
\begin{defn}\label{General-Case}
For $i,\ j=1,\ 2,...$, let  $f_{i,j}: \Sph^2\to \R^+$ be positive smooth functions, satisfying the following two requirements
\begin{enumerate}[(i)]
    \item For some $0<T<\infty$
\be
\int_{\Sph^2} f_{i,j} d\vol_{\Sph^2} \leq T,\text{ for all }i,\ j
\ee
where $d\vol_{\Sph^2}$ is the volume form on the standard $\Sph^2$;  
\item and, for all $i,\ j$,
\be
2\leq f_{i,j}\text{ and }\Delta f_{i,j}\leq f_{i,j},
\ee
where $\Delta$ is the Laplacian in the standard sphere.
\end{enumerate}

Let $\{ A_i\}_{i=1}^\infty$  be a sequence of numbers such that $A_i\geq 0$, $A_1>0$ and that $\sum_{i=1}^\infty A_i=K <\infty$. For $j=1,\ 2,\ 3,...$, let $h_j:\Sph^2\to \R^+$ be
\be
h_j(x)=\sum_{i=1}^j A_i f_{i,j}(x).
\ee 
Define a sequence of warped product metrics on $\Sph^2\times \Sph^1$ by
\be
g_j= g_{\Sph^2}+h_j^2 g_{\Sph^1}.
\ee
We use the notation $\Sph^2\times_{h_j}\Sph^1$ to denote $(\Sph^2\times\Sph^1, g_j)$.
\end{defn}

We state in the following theorem that the general class of examples given in Definition \ref{General-Case} satisfies several uniform geometric bounds.
\bt\label{MT}
For the sequence $\Sph^2\times_{h_j}\Sph^1$ defined in Definition \ref{General-Case}, let $\scal_j$ denote the scalar curvature of each $\Sph^2\times_{h_j}\Sph^1$, then we have
\be
\scal_j\geq 0 \text{ for all }j.
\ee
Moreover, there exists $v>0$, $V>0$, and $D>0$ such that
\be
v\leq \vol(\Sph^2\times_{h_j}\Sph^1)\leq  V \text{ and }\diam(\Sph^2\times_{h_j}\Sph^1) \leq D \text{ for all }j.
\ee
\et
We prove this theorem in Section \ref{Sec-Thm}. We prove that each Riemannian manifold is non-degenerate in the following lemma.


\begin{lem}\label{Lem-Tensor-Nondegenerate}
Each Riemannian metric tensor $g_j$ in Definition \ref{General-Case} is strictly positive definite. The sequence of function $\{h_j\}_{j=1}^\infty$ as in Definition \ref{General-Case} is bounded below by $2A_1$, and the sequence of Riemannian metric tensor $\{g_j\}_{j=1}^\infty$ as in Definition \ref{General-Case} satisfies
\be
 g_{j}\geq g_{\Sph^2}+(2A_1)^2 g_{\Sph^1} \quad \text{for all } j
\ee
\end{lem}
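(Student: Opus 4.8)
The plan is to first establish the pointwise lower bound $h_j \geq 2A_1$, from which both the metric-tensor inequality and strict positive definiteness follow almost immediately from the warped-product structure. The whole lemma rests on the pointwise floor $f_{i,j}\geq 2$ supplied by hypothesis (ii) of Definition \ref{General-Case}.

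First I would bound $h_j$ from below. By hypothesis (ii), every $f_{i,j} \geq 2$; by assumption the coefficients satisfy $A_i \geq 0$ with $A_1 > 0$. Hence each summand $A_i f_{i,j}(x) \geq 0$, and for any $j \geq 1$ the defining sum contains the $i=1$ term, so that discarding the remaining nonnegative terms gives
\[
h_j(x) = \sum_{i=1}^j A_i f_{i,j}(x) \geq A_1 f_{1,j}(x) \geq 2A_1 > 0
\]
for every $x \in \Sph^2$ and every $j$. This proves the claimed lower bound and, in particular, shows $h_j$ is strictly positive.

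Next I would deduce the metric inequality. Squaring the bound gives $h_j^2 \geq (2A_1)^2$ pointwise, and since $g_{\Sph^1}$ is positive semidefinite, multiplying it by the larger scalar $h_j^2$ only enlarges the corresponding quadratic form; therefore
\[
g_j = g_{\Sph^2} + h_j^2 g_{\Sph^1} \geq g_{\Sph^2} + (2A_1)^2 g_{\Sph^1}
\]
as quadratic forms on $T(\Sph^2 \times \Sph^1)$. For strict positive definiteness I would evaluate $g_j$ on a tangent vector $(v,w) \in T_p\Sph^2 \oplus T_q\Sph^1$; because the warped product splits orthogonally,
\[
g_j\big((v,w),(v,w)\big) = g_{\Sph^2}(v,v) + h_j^2\, g_{\Sph^1}(w,w).
\]
Both terms are nonnegative, and vanishing of the sum forces $v = 0$ (as $g_{\Sph^2}$ is positive definite) and, because $h_j^2 > 0$, also $w = 0$; hence $g_j$ is positive definite.

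The argument has no genuine obstacle: the only substantive point is that the pointwise floor $f_{i,j} \geq 2$ together with $A_1 > 0$ prevents the warping function $h_j$, and therefore the $\Sph^1$ direction of the metric, from degenerating anywhere on $\Sph^2$. It is also worth noting that the finiteness of the defining sum (only $j$ terms) guarantees $h_j$ is smooth, so that $g_j$ is a genuine smooth Riemannian metric rather than merely a nonnegative tensor.
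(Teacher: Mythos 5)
Your proof is correct and follows essentially the same route as the paper: drop the nonnegative terms $i\geq 2$ to get $h_j \geq A_1 f_{1,j}$, invoke the floor $f_{1,j}\geq 2$ from Definition \ref{General-Case}(ii) to conclude $h_j \geq 2A_1$, and then read off both the tensor inequality and positive definiteness from the warped-product structure. The only difference is that you spell out the quadratic-form evaluation and the smoothness remark explicitly, which the paper leaves implicit.
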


\begin{proof}
In Definition \ref{General-Case}, each $f_{i,j}$ is a positive function. Moreover, since we choose $A_i \geq 0$ for all $i$ and $A_1>0$, each $g_j$ is strictly positive definite.

By the choice of the sequence $\{A_i\}_{i=1}^\infty$ in Definition \ref{General-Case} we have
\be
h_j=\sum_{i=1}^j A_i f_{i,j}\geq A_1 f_{1,j}.
\ee
Since in Definition \ref{General-Case} we assume $f_{i,j}\geq 2$ for all $i,\ j$, we have
\be
h_j \geq  2A_1
\ee
for all $j$. The inequality for $g_j$ follows from this.
\end{proof}

In the following remark, we explain how the results from \cite{Tian-Wang} can be applied to the general class of examples in Definition \ref{General-Case}.
\br\label{rmk-mina}
In the general class of example Definition \ref{General-Case}, we do not have the $\mina$ lower bound as required in Theorem 1.3, Theorem  1.7, and Theorem 1.8 of \cite{Tian-Wang}. But in \cite{Tian-Wang}, the $\mina$ lower bound is only used in Proposition 4.10 and part (iii) of Theorem 1.3 to prove that the sequence of warping functions does not converge to the zero function. The $\mina$ lower bound is included in the statement of Theorem 1.7 and 1.8 from \cite{Tian-Wang} to ensure that the limit warping function is not the zero function and hence the limit metric is a (weak) warped product Riemannian metric as defined in Definition 5.1 of \cite{Tian-Wang}.

By Lemma \ref{Lem-Tensor-Nondegenerate}, we know that each $h_j$ is strictly positive and that the sequence is uniformly bounded below by $2 A_1>0$. Hence, $h_j$ cannot converge to the zero function as $j\to\infty$. As a result, by Theorem \ref{MT}, we can still apply Theorem 1.3, Theorem  1.7, and Theorem 1.8 of \cite{Tian-Wang} to the general class of examples in Definition \ref{General-Case}.
\er

By Theorem \ref{MT} and Remark \ref{rmk-mina}, we can apply \cite{Tian-Wang} to the general class of examples in Definition \ref{General-Case}. We have the following corollary:

\begin{cor}\label{cor-conv}
Let $\{\Sph^2\times_{h_j}\Sph^1\}_{j=1}^\infty$ be a sequence of warped product Riemannian manifold as in Definition \ref{General-Case}. Then, there exists a subsequence $g_{j_k}$ and a (weak) warped product Riemannian metric $g_\infty$ such that $g_\infty \in W^{1,p}(\Sph^2\times \Sph^1,g_0)$ for $p\in [1,2)$, and that  $g_{j_k}\to g_\infty$ in $L^q(\Sph^2\times \Sph^1,g_0)$  for all $q\in [1,\infty)$ as $k\to \infty$. 

Moreover, the limit metric $g_\infty$ has non-negative distributional scalar curvature as defined in \cite{Lee-LeFloch}. The total curvature of the sequence $g_{j_k}$ converges to the total scalar curvature of $g_\infty$ as $k\to\infty$.

\end{cor}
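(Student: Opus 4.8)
The plan is to obtain Corollary~\ref{cor-conv} as a direct application of the compactness and convergence theorems of \cite{Tian-Wang} (Theorems~1.3, 1.7, and 1.8 there), once the hypotheses of those theorems have been matched to the present setting. The only genuine work lies in verifying that the sequence $\Sph^2\times_{h_j}\Sph^1$ meets the input requirements of that machinery; the three conclusions of the corollary then follow by quoting the corresponding outputs.

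First I would assemble the geometric hypotheses. Theorem~\ref{MT} already supplies the three uniform bounds needed to run the compactness argument of \cite{Tian-Wang}: the uniform scalar curvature lower bound $\scal_j\geq 0$, a uniform volume upper bound $\vol(\Sph^2\times_{h_j}\Sph^1)\leq V$, and a uniform diameter upper bound $\diam(\Sph^2\times_{h_j}\Sph^1)\leq D$. The one remaining hypothesis in \cite{Tian-Wang} is the $\mina$ lower bound, and by Remark~\ref{rmk-mina} this is used there solely to prevent the warping function from degenerating to zero in the limit. In the present family that role is played by Lemma~\ref{Lem-Tensor-Nondegenerate}, which gives the uniform lower bound $h_j\geq 2A_1>0$; hence the limiting warping function satisfies $h_\infty\geq 2A_1>0$ and $g_\infty$ is a genuine (weak) warped product rather than a degenerate one. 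At this stage one also records the uniform $L^1$ control $\int_{\Sph^2}h_j\,d\vol_{\Sph^2}=\sum_{i=1}^{j}A_i\int_{\Sph^2}f_{i,j}\,d\vol_{\Sph^2}\leq KT$ coming from requirement (i) of Definition~\ref{General-Case}, which furnishes the uniform integral control underlying the compactness estimates of \cite{Tian-Wang}.

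With the hypotheses in place, I would extract the conclusions in order. Applying Theorem~1.3 and Theorem~1.7 of \cite{Tian-Wang} produces a subsequence $g_{j_k}$ and a weak warped product metric $g_\infty\in W^{1,p}(\Sph^2\times\Sph^1,g_0)$, $p\in[1,2)$, with $g_{j_k}\to g_\infty$ in $L^q(\Sph^2\times\Sph^1,g_0)$ for every $q\in[1,\infty)$; concretely this is the convergence $h_{j_k}\to h_\infty$ of warping functions promoted to the metric tensors via $g_j=g_{\Sph^2}+h_j^2 g_{\Sph^1}$. The non-negativity of the distributional scalar curvature of $g_\infty$ in the sense of \cite{Lee-LeFloch} then follows from Theorem~1.7 and Theorem~1.8 of \cite{Tian-Wang} together with $\scal_{j_k}\geq 0$: the sign is preserved under the weak limit defining the distributional scalar curvature. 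For the final assertion I would use the warped-product identity $\scal_j=2-2\,\Delta h_j/h_j$ on $\Sph^2\times_{h_j}\Sph^1$; integrating against $d\vol_{g_j}$ and using $\int_{\Sph^2}\Delta h_j\,d\vol_{\Sph^2}=0$ reduces the total scalar curvature to a fixed constant multiple of $\int_{\Sph^2}h_j\,d\vol_{\Sph^2}$, so that the $L^1$ convergence $h_{j_k}\to h_\infty$ already established yields convergence of the total scalar curvature to the corresponding quantity for $g_\infty$, matching its total distributional scalar curvature.

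I expect the main obstacle to be not any single estimate but the bookkeeping needed to confirm that the proofs in \cite{Tian-Wang} apply verbatim to this broader family. There the warping functions arise from a single-pole construction, whereas here $h_j=\sum_{i=1}^j A_i f_{i,j}$ is a finite sum of functions attached to many distinct (and, in Examples~\ref{Example-Case2} and \ref{Example-Case3}, moving) points. One must check that the arguments of \cite{Tian-Wang} invoke only the uniform data just verified—the lower bound $2A_1$, the $L^1$ bound $KT$, the scalar curvature sign, and the volume and diameter bounds—and not any feature special to the single-pole or rotationally symmetric situation. Granting this, exactly as Remark~\ref{rmk-mina} asserts for the $\mina$ hypothesis, the three conclusions are immediate citations and the corollary follows.
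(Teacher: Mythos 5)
Your proposal is correct and takes essentially the same route as the paper: the paper's proof likewise consists of verifying the hypotheses of the \cite{Tian-Wang} machinery via Theorem \ref{MT}, handling the missing $\mina$ hypothesis exactly as you do via Lemma \ref{Lem-Tensor-Nondegenerate} and Remark \ref{rmk-mina}, and then quoting Theorems 1.7 and 1.8 of \cite{Tian-Wang} for all three conclusions, including the convergence of total scalar curvature. Your supplementary direct computation reducing the total scalar curvature to $4\pi\int_{\Sph^2}h_j\,d\vol_{\Sph^2}$ is a sound consistency check but is not needed, since that convergence statement is already among the cited outputs.
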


\begin{proof}
Combine Theorem \ref{MT} with Theorem 1.7 and 1.8 from \cite{Tian-Wang}. The spaces $L^q(\Sph^2\times \Sph^1,g_0)$ and $W^{1,p}(\Sph^2\times \Sph^1,g_0)$ are defined in Definition 5.3 from \cite{Tian-Wang}.
\end{proof}

\section{\bf Examples of sequences of warped product $\Sph^2\times \Sph^1$}\label{Sec-Examples}
We prove Theorem \ref{MT-New} in this section. 

Firstly, we prove that the class of example in Definition \ref{Sub-General-Case} is included in Definition \ref{General-Case}. We need the following lemmas:
\bl\label{Lem-Single-Vol}
Let $\bar{K}>2$. Let $a>0$, $b\in [2,\bar{K}]$. Let $x_0\in \Sph^2$ such that for $x\in \Sph^2$, $r_0(x)=d_{\Sph^2}(x,x_0)$. Let $f_{a,b}$ be as in Definition \ref{Defn-functions}. Then the function $f_{a,b}(r_0(x))$ satisfies
\be
0<f_{a,b}(r_0(x))\leq f_{0,b}(r_0(x))=-2\ln\left(\sin r_0(x)\right)+b,
\ee
and hence
\begin{align}
    0 &<\int_{\Sph^2} f_{a,b}d\vol_{\Sph^2} \\
    &\leq \int_{\Sph^2} f_{0,b}d\vol_{\Sph^2} \\
    &=8\pi-4\pi\ln 4+4\pi b\\
    &\leq 8\pi-4\pi\ln 4+4\pi \bar{K}.
\end{align}
\el

\begin{proof}
Since $f_{a,b}(r_0(x))$ is a radial function that only depends on $r_0(x)$, with slight abuse of notation, we use $f_{a,b}(r_0)$ and $f_{0,b}(r_0)$ to denote $f_{a,b}(r_0(x))$ and $f_{0,b}(r_0(x))$.

Since $\frac{1+a}{\sin^2 r+a}=1+\frac{1-\sin^2 r}{\sin^2 r+a}$ is a decreasing function of $a$. We have
\be
\ln\left(\frac{1+a}{\sin^2 r+a}\right)\leq -2\ln\left(\sin r\right).
\ee
Hence we have
\be
0<f_{a,b}(r_0)\leq f_{0,b}(r_0).
\ee

In the polar coordinate centered at the point $x_0$ the metric in $\Sph^2$ is
\be
g_{\Sph^2}= dr_0^2+ \sin^2 r_0 d\theta^2.
\ee
Hence $d\vol_{\Sph^2}= \sin r_0 d r_0 d\theta$.
We integrate $f_{0,b}$ over the standard $\Sph^2$ to get
\begin{align}
    \int_{\Sph^2} f_{0,b}  d\vol_{\Sph^2} &=\int_{r_0=0}^{\pi} \int_{\theta=0}^{2\pi} (-2\ln(\sin r_0)+b )\sin r_0 d\theta dr_0\\
    &=4\pi b+2\pi\int_{r_0=0}^{\pi} -2\sin r_0 \ln(\sin r_0)dr_0.
\end{align}
Since the function $-2\ln(\sin r_0)$ has anti-derivative
\be
\int -2\sin r_0\ln(\sin r_0) dr_0 =-2\left(\cos r_0+\ln\left(\tan\frac{r_0}{2}\right)-\cos r_0 \ln(\sin r_0)\right),
\ee
plug in the bounds, then we get 
\be
\int_{\Sph^2} f_{0,b}d\vol_{\Sph^2}     =8\pi-4\pi\ln 4+4\pi b     \leq 8\pi-4\pi\ln 4+4\pi \bar{K}.
\ee
This finishes the proof.
\end{proof}

\bl[Proposition 2.5 from \cite{STW}]\label{Lem-Single-Scal}
Let $a>0$, $b>2$. Let $x_0\in \Sph^2$ such that for $x\in \Sph^2$, $r_0(x)=d_{\Sph^2}(x,x_0)$. Let $f_{a,b}$ be as in Definition \ref{Defn-functions}. Then the function $f_{a,b}(r_0(x))$ satisfies
\be
\Delta f_{a,b}(r_0(x)) \leq f_{a,b}(r_0(x)),
\ee
where $\Delta$ is the Laplacian on the standard $\Sph^2$
\el
\begin{proof}
Refer to Proposition 2.5 from \cite{STW}.
\end{proof}

Now, we prove that the sequence in Definition \ref{Sub-General-Case} is included in Definition \ref{General-Case}.

\begin{prop}\label{Prop-Inclusion}
The sequence $\{f_{a_j,b_i}(r_{ij}(x))\}_{i,j=1}^\infty$ in Definition \ref{Sub-General-Case} satisfies the requirements (i) and (ii) in Definition \ref{General-Case} with $T= 8\pi-4\pi\ln 4+4\pi \bar{K}$. As a result, the sequence in Definition \ref{Sub-General-Case} is a special case of Definition \ref{General-Case}.
\end{prop}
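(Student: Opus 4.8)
The plan is to verify requirements (i) and (ii) of \defref{General-Case} directly for the functions $f_{i,j}(x) = f_{a_j,b_i}(r_{ij}(x))$, relying on the two single-point lemmas just proved. First I would note that the sequence $\{A_i\}$ chosen in \defref{Sub-General-Case} satisfies exactly the conditions imposed on $\{A_i\}$ in \defref{General-Case}, and that the warping function $\eta_j=\sum_{i=1}^j A_i f_{a_j,b_i}(r_{ij})$ is the function $h_j$ of \defref{General-Case} under the identification $f_{i,j}=f_{a_j,b_i}(r_{ij}(\cdot))$. Hence the entire content of the proposition is to check that these $f_{i,j}$ are positive smooth functions meeting (i) and (ii). Positivity is immediate from \defref{Defn-functions}, and smoothness on all of $\Sph^2$ — including at the cut point antipodal to $x_{ij}$, where the distance $r_{ij}$ itself is not smooth — is the regularity already established in \cite{STW}, since $f_{a,b}$ depends on the point only through the smooth quantity $\sin^2 r_{ij}$.

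For requirement (i) I would apply \lemref{Lem-Single-Vol} with $a=a_j>0$, $b=b_i\in[2,\bar K]$, and $x_0=x_{ij}$. This gives $\int_{\Sph^2} f_{a_j,b_i}(r_{ij})\, d\vol_{\Sph^2} \le 8\pi-4\pi\ln 4+4\pi b_i \le 8\pi-4\pi\ln 4+4\pi\bar K = T$, and since the final bound depends only on $\bar K$, it is uniform in both $i$ and $j$, which is precisely what (i) demands.

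For the first half of requirement (ii), the pointwise bound $f_{i,j}\ge 2$, I would observe that $\sin^2 r\le 1$ forces $\frac{1+a}{\sin^2 r+a}\ge 1$, so the logarithmic term in $f_{a_j,b_i}$ is nonnegative; adding $b_i\ge 2$ yields $f_{i,j}\ge 2$ everywhere. The second half, $\Delta f_{i,j}\le f_{i,j}$, is \lemref{Lem-Single-Scal} applied with $a=a_j$, $b=b_i$, and $x_0=x_{ij}$.

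The one genuine wrinkle — and the step I would treat most carefully — is that \lemref{Lem-Single-Scal} is stated for $b>2$, whereas \defref{Sub-General-Case} permits $b_i=2$. I expect this to be the only obstacle, and I would close it using the fact that $\Delta f_{a,b}$ is independent of $b$ (adding a constant leaves the Laplacian unchanged). Thus for every $b'>2$ one has $\Delta f_{a,2}=\Delta f_{a,b'}\le f_{a,b'}=f_{a,2}+(b'-2)$, and letting $b'\to 2^+$ gives $\Delta f_{a,2}\le f_{a,2}$; consequently, for any $b\ge 2$, $\Delta f_{a,b}=\Delta f_{a,2}\le f_{a,2}\le f_{a,b}$. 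With (i) and (ii) both verified and $T$ independent of $i,j$, the sequence of \defref{Sub-General-Case} is exhibited as a special case of \defref{General-Case}.
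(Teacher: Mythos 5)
Your proposal is correct and follows essentially the same route as the paper: apply \lemref{Lem-Single-Vol} and \lemref{Lem-Single-Scal} to each $f_{a_j,b_i}(r_{ij}(x))$ with $x_0=x_{ij}$, $a=a_j$, $b=b_i$, and obtain $f_{i,j}\geq 2$ from $b_i\geq 2$. Your additional step closing the $b_i=2$ edge case (since \lemref{Lem-Single-Scal} is stated only for $b>2$ while \defref{Sub-General-Case} allows $b_i=2$) is a detail the paper's one-line proof passes over silently, and your constant-shift argument $\Delta f_{a,b}=\Delta f_{a,2}\leq f_{a,2}\leq f_{a,b}$ resolves it correctly.
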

\begin{proof}
Apply Lemma \ref{Lem-Single-Vol} and \ref{Lem-Single-Scal} to each $f_{a_j,b_i}(r_{ij}(x))$, where $x_0=x_{ij}$, $a=a_j$, and $b=b_i$. The inequality $f_{i,j}\geq 2$ follows from the choice that $b_i\geq 2$ for all $i$.
\end{proof}


Now, we prove Theorem \ref{MT-New}.
\begin{proof}[Proof of Theorem \ref{MT-New}]
By Proposition \ref{Prop-Inclusion}, the class of example in Definition \ref{Sub-General-Case} is included in the larger class in Definition \ref{General-Case}. The scalar curvature non-negative condition and the volume and diameter upper bounds are proved in Theorem \ref{MT}. The $L^q$ convergence and $W^{1,q}$ regularity are proved in Corollary \ref{cor-conv}.
\end{proof}

\section{\bf Limit Metric Tensor}\label{Sec-Lim}
The sequences in Example \ref{Example-Case1} and \ref{Example-Case2} have a limit metric tensor. Here, we directly prove the convergence to the limit metric tensor by generalizing the results from \cite{STW}. The $W^{1,p}$ convergence we obtain in Proposition \ref{Prop-Limit} is stronger than the convergence we obtain in \cite{Tian-Wang}. For Example \ref{Example-Case3}, we can still apply the results from \cite{Tian-Wang} to get subsequential convergence as discussed in Corollary \ref{cor-conv}. 

We start with the following lemmas.

\bl[Proposition 3.7 from \cite{STW}]\label{Lem-Single-Gradient-Convergence}
 Let $x_0\in \Sph^2$ such that for $x\in \Sph^2$, $r_0(x)=d_{\Sph^2}(x,x_0)$. Let $f_{a,b}$ be as in Definition \ref{Defn-functions}. Let $\{a_j\}_{j=1}^\infty$ be the sequence chosen in Definition \ref{Sub-General-Case}. Then we have $f_{0,b_i}(r_0(x))\in W^{1,p}(\Sph^2)$ for $p\in [1,2)$, $f_{0,b_i}(r_0(x))\notin W^{1,2}(\Sph^2)$, 
 \be
  f_{a_j,b_i}(r_0(x))\to f_{0,b_i}(r_0(x))
 \ee
in $L^q(\Sph^2)$ for all $q\in [1,\infty)$, and that
 \be
\nabla f_{a_j,b_i}(r_0(x))\to \nabla  f_{0,b_i}(r_0(x))
 \ee
 in $L^p(\Sph^2)$ for all $p\in [1,2)$. Here $\nabla$ is the connection in the standard $\Sph^2$.
\el
\begin{proof}
Refer to Proposition 3.7 from \cite{STW}.
\end{proof}

\bl\label{Lem-Gradient}
Let $f_{a,b}$ be as in Definition \ref{Defn-functions}. Let  $\{A_i\}_{i=1}^\infty,\ \{a_j\}_{j=1}^\infty, \text{ and }\ \{b_i\}_{i=1}^\infty$ be the sequences chosen in Definition \ref{Sub-General-Case}. Let $\{\bar{x}_i\}_{i=1}^\infty$, $\{\hat{x}_i\}_{i=1}^\infty$ be the sequences of points in $\Sph^2$ chosen in Example \ref{Example-Case1} and \ref{Example-Case2} respectively. Then $\sum_{i=1}^\infty A_i f_{0, b_i}(\bar{r}_i(x)),\ \sum_{i=1}^\infty A_i f_{0, b_i}(\hat{r}_i(x))\in W^{1,p}(\Sph^2)$ for $p\in [1,2)$ and 
\begin{align}
\nabla \sum_{i=1}^\infty A_i f_{0, b_i}(\bar{r}_i(x)) &= \sum_{i=1}^\infty A_i  \nabla f_{0, b_i}(\bar{r}_i(x)),\\
\nabla \sum_{i=1}^\infty A_i f_{0, b_i}(\hat{r}_i(x)) &= \sum_{i=1}^\infty A_i  \nabla f_{0, b_i}(\hat{r}_i(x)).
\end{align}
\el
\begin{proof}
We only prove it for Example \ref{Example-Case2}. The sequence in Example \ref{Example-Case1} can be considered a special case of Example \ref{Example-Case2}.

For any smooth vector field $X$ in $\Sph^2$, for all $j$, by integration by parts, we have
\be
\int_{\Sph^2} \left\langle X, \sum_{i=1}^j A_i \nabla f_{0,b_i}(\hat{r}_i(x))\right\rangle d\vol_{\Sph^2}
=-\int_{\Sph^2} \text{div} X \sum_{i=1}^j A_i f_{0,b_i}(\hat{r}_i(x)).
\ee
Here $\langle \cdot,  \cdot \rangle$ and $\text{div}$ are defined using the standard metric in $\Sph^2$.

Since $A_i\geq 0$ and $f_{0,b_i}>0$, by the monotone convergence theorem
\be
\int_{\Sph^2}  \sum_{i=1}^j A_i f_{0,b_i}(\hat{r}_i(x))\xrightarrow{j\to \infty} \int_{\Sph^2}  \sum_{i=1}^\infty A_i f_{0,b_i}(\hat{r}_i(x)),
\ee
By Definition \ref{Sub-General-Case} and Lemma \ref{Lem-Single-Vol} we have
\be
\int_{\Sph^2}\sum_{i=1}^\infty A_i f_{0,b_i}(\hat{r}_i(x))<\infty.
\ee
Therefore, by the H\"older's inequality and the dominated convergence theorem, we have
\be
\int_{\Sph^2}  \text{div} X \sum_{i=1}^j A_i f_{0,b_i}(\hat{r}_i(x))\xrightarrow{j\to \infty} 
\int_{\Sph^2}  \text{div} X  \sum_{i=1}^\infty A_i f_{0,b_i}(\hat{r}_i(x)).
\ee
On the other hand, by the Cauchy-Schwarz inequality, triangle inequality, Lemma \ref{Lem-Single-Gradient-Convergence}, and the dominated convergence theorem we have
\be
\int_{\Sph^2} \left\langle X, \sum_{i=1}^j A_i \nabla f_{0,b_i}(\hat{r}_i(x))\right\rangle d\vol_{\Sph^2} \xrightarrow{j\to \infty}     
 \int_{\Sph^2} \left\langle X, \sum_{i=1}^\infty A_i \nabla f_{0,b_i}(\hat{r}_i(x))\right\rangle d\vol_{\Sph^2}.
\ee
As a result, we have
\be
 \int_{\Sph^2} \left\langle X, \sum_{i=1}^\infty A_i \nabla f_{0,b_i}(\hat{r}_i(x))\right\rangle d\vol_{\Sph^2} =
 -\int_{\Sph^2}  \text{div} X  \sum_{i=1}^\infty A_i f_{0,b_i}(\hat{r}_i(x)),
\ee
for all smooth vector field $X$ in $\Sph^2$, and hence
\be
\nabla \sum_{i=1}^\infty A_i f_{0, b_i}(\hat{r}_i(x)) = \sum_{i=1}^\infty A_i  \nabla f_{0, b_i}(\hat{r}_i(x)).
\ee

By the vector Jensen's inequality, we have for $p\in [1,2)$
\be
\left| \frac{\sum_{i=1}^\infty A_i  \nabla f_{0, b_i}(\hat{r}_i(x))}{K} \right|^p \leq \frac{\sum_{i=1}^\infty A_i \left| \nabla f_{0, b_i}(\hat{r}_i(x)) \right| ^p}{K},
\ee
where $\sum_{i=1}^\infty A_{i}  =K <\infty$. By the monotone convergence theorem and Lemma \ref{Lem-Single-Gradient-Convergence},  for all $p\in[1,2)$ we have
\be
\sum_{i=1}^\infty A_i \left| \nabla f_{0, b_i}(\hat{r}_i(x)) \right| ^p\in L^1(\Sph^2).
\ee
As a result, 
\be
 \sum_{i=1}^\infty A_i f_{0, b_i}(\hat{r}_i(x))\in W^{1,p}(\Sph^2)\text{ for }p\in [1,2).
\ee
\end{proof}

\bl\label{Lem-Convergence-WarpingFunction}
Let $f_{a,b}$ be as in Definition \ref{Defn-functions}. Let  $\{A_i\}_{i=1}^\infty,\ \{a_j\}_{j=1}^\infty, \text{ and }\ \{b_i\}_{i=1}^\infty$ be the sequences chosen in Definition \ref{Sub-General-Case}. Let $\{\bar{x}_i\}_{i=1}^\infty$, $\{\hat{x}_i\}_{i=1}^\infty$ be the sequences of points in $\Sph^2$ chosen in Example \ref{Example-Case1} and \ref{Example-Case2} respectively. Then 
\begin{align}
\sum_{i=1}^j A_i f_{a_j, b_i}(\bar{r}_i(x)) &\to \sum_{i=1}^\infty A_i f_{0, b_i}(\bar{r}_i(x)),\\
\text{ and } \sum_{i=1}^j A_i f_{a_j, b_i}(\hat{r}_i(x)) & \to \sum_{i=1}^\infty A_i f_{0, b_i}(\hat{r}_i(x))
\end{align}
in $W^{1,p}(\Sph^2)$ norm for $p\in [1,2)$.
\el
\begin{proof}
Again, we only prove it for Example \ref{Example-Case2}. The sequence in Example \ref{Example-Case1} can be considered a special case of Example \ref{Example-Case2}.
Think of the summation $\sum_{i=1}^\infty $ as integration with counting measure, then by Jensen's inequality, we have for $q\in [1,\infty)$
\begin{align}
&\left|\frac{ \sum_{i=1}^\infty A_i f_{0, b_i}(\hat{r}_i(x))-    \sum_{i=1}^j A_i f_{a_j, b_i}(\hat{r}_i(x))}{K} \right|^q   \\
&=  \left|   \frac{ \sum_{i=1}^j  A_i  \left[ f_{0, b_i}(\hat{r}_i(x))- f_{a_j, b_i}(\hat{r}_i(x))\right]   + \sum_{i=j+1}^\infty  A_i f_{0, b_i}(\hat{r}_i(x))}{K} \right|^q   \\
&\leq  \frac{1}{K} \sum_{i=1}^j  A_i  \left| f_{0, b_i}(\hat{r}_i(x))- f_{a_j, b_i}(\hat{r}_i(x))\right|^q 
+\frac{1}{K}  \sum_{i=j+1}^\infty  A_i \left| f_{0, b_i}(\hat{r}_i(x)) \right|^q,
\end{align}
where $\sum_{i=1}^\infty A_{i}  =K <\infty$. By Lemma \ref{Lem-Single-Gradient-Convergence} and the summability of the sequence $\{A_i\}_{i=1}^\infty$, we have for $q\in [1,\infty)$
\be
\sum_{i=j+1}^\infty A_i \int_{\Sph^2} \left(f_{0, b_i}(\hat{r}_i(x))\right)^q d\vol_{\Sph^2}\to 0 \text{ as }j\to \infty.
\ee
By Lemma \ref{Lem-Single-Gradient-Convergence}, we have for all $i$
\be
\int_{\Sph^2}\left(f_{0, b_i}(\hat{r}_i(x))- f_{a_j, b_i}(\hat{r}_i(x)) \right)^q d\vol_{\Sph^2} \to 0\text{ as }j\to \infty.
\ee
Note that the convergence here does not depend on $i$. As a result, 
\be
\sum_{i=1}^j A_i f_{a_j, b_i}(\hat{r}_i(x))\to\sum_{i=1}^\infty A_i f_{0, b_i}(\hat{r}_i(x))
\ee
in $L^q(\Sph^2)$ as $j\to\infty$ for all $q\in [1,\infty)$.

By Lemma \ref{Lem-Gradient}, we have $\sum_{i=1}^\infty A_i f_{0, b_i}(\hat{r}_i(x))\in W^{1,p}(\Sph^2)$ for $p\in [1,2)$ and that
\be
\nabla \sum_{i=1}^\infty A_i f_{0, b_i}(\hat{r}_i(x)) = \sum_{i=1}^\infty A_i  \nabla f_{0, b_i}(\hat{r}_i(x)).
\ee
By Jensen's inequality, we have for $p\in [1,2)$
\begin{align}
&\left| \frac{\nabla \sum_{i=1}^\infty A_i f_{0, b_i}(\hat{r}_i(x))-    \nabla\sum_{i=1}^j A_i f_{a_j, b_i}(\hat{r}_i(x))}{K} \right|^p   \\
&\leq \frac{1}{K}   \sum_{i=1}^j A_i  \left|  \nabla f_{0, b_i}(\hat{r}_i(x)) -  \nabla f_{a_j, b_i}(\hat{r}_i(x))\right|^p +\frac{1}{K}  \sum_{i=j+1}^\infty A_i \left|\nabla f_{0, b_i}(\hat{r}_i(x)) \right|^p
\end{align}
By Lemma \ref{Lem-Single-Gradient-Convergence} and the summability of the sequence $\{A_i\}_{i=1}^\infty$, we have for $p\in [1,2)$
\be
\sum_{i=j+1}^\infty A_i \left|\nabla f_{0, b_i}(\hat{r}_i(x)) \right|^p d\vol_{\Sph^2}\to 0 \text{ as }j\to \infty.
\ee
By Lemma \ref{Lem-Single-Gradient-Convergence}, we have for all $i$
\be
\int_{\Sph^2}  \left|  \nabla f_{0, b_i}(\hat{r}_i(x)) -  \nabla f_{a_j, b_i}(\hat{r}_i(x))\right|^p d\vol_{\Sph^2} \to 0\text{ as }j\to \infty.
\ee
Note that the convergence here does not depend on $i$. As a result, 
\be
\sum_{i=1}^j A_i   \nabla f_{a_j, b_i}(\hat{r}_i(x))\to\sum_{i=1}^\infty A_i   \nabla f_{0, b_i}(\hat{r}_i(x))
\ee
in $L^p(\Sph^2)$ as $j\to\infty$ for all $p\in [1,2)$.
\end{proof}

Now we identify the limit tensor and prove the following:
\bp[Proposition 3.9 from \cite{STW}]\label{Prop-Limit}
The sequence of Riemannian metric tensor in Example \ref{Example-Case1} converges in $W^{1,p}(\Sph^2\times \Sph^1,g_0)$ norm to the following metric
\be
\bar{g}_\infty=g_{\Sph^2}+\left(\sum_{i=1}^\infty A_i f_{0, b_i}(\bar{r}_i(x))\right)g_{\Sph^1},
\ee
where $\bar{r}_i(x)=d_{\Sph^2}(x,\bar{x}_i)$, and the sequence $\{\bar{x}_i\}_{i=1}^\infty$ is as chosen in Example \ref{Example-Case1}.
The sequence of Riemannian metric tensor in Example \ref{Example-Case2} converges in $W^{1,p}(\Sph^2\times \Sph^1,g_0)$ norm to the following metric
\be
\hat{g}_\infty=g_{\Sph^2}+\left(\sum_{i=1}^\infty A_i f_{0, b_i}(\hat{r}_i(x))\right)g_{\Sph^1},
\ee
where $\hat{r}_i(x)=d_{\Sph^2}(x,\hat{x}_i)$, and the sequence $\{\hat{x}_i\}_{i=1}^\infty$ is as chosen in Example \ref{Example-Case2}.
The notion $W^{1,p}(\Sph^2\times \Sph^1,g_0)$ is given in Definition 3.5 from \cite{STW}
\ep
\begin{proof}
Combine Lemma \ref{Lem-Convergence-WarpingFunction} with Proposition 3.9 from \cite{STW}.
\end{proof}

\section{\bf Scalar Curvature, Volume, and Diameter}\label{Sec-Thm}
We prove Theorem \ref{MT} in this section. We separate the proof into several different parts.

First, we prove that each manifold in Definition \ref{General-Case} has non-negative scalar curvature:
\begin{prop}[Scalar Curvature]\label{Prop-Scal}
For each $j$, the scalar curvature of $\Sph^2\times_{h_j}\Sph^1$ in Definition \ref{General-Case} is non-negative.
\end{prop}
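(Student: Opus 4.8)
The plan is to reduce the assertion to a single pointwise inequality on $\Sph^2$ via the warped product scalar curvature formula, after which the hypotheses of Definition \ref{General-Case} close the argument immediately. First I would record the scalar curvature of $\Sph^2\times_{h_j}\Sph^1$. Because the fiber $\Sph^1$ is one-dimensional (hence flat, with fiber dimension $s=1$ so that the $s(s-1)$ term drops) and the base is the round $\Sph^2$ with constant scalar curvature $2$, specializing the standard warped product curvature formula---equivalently, repeating the single-pole computation of \cite{STW}, which is local and depends only on the warping factor being a positive smooth function on $\Sph^2$---gives
\be
\scal_j = 2 - \frac{2\,\Delta h_j}{h_j},
\ee
where $\Delta=\operatorname{div}\nabla$ is the Laplacian on the standard $\Sph^2$ in the convention used throughout the paper. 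Deriving this identity---or citing the STW computation verbatim with its warping function replaced by $h_j$---is the one step that warrants care, since the sign convention for $\Delta$ must be fixed so that requirement (ii) is exactly the condition that bounds the curvature term from below; as a consistency check one has $\Delta f_{0,b}=2$ for the radial profile $f_{0,b}(r)=-2\ln\sin r+b$.

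With the formula in hand, the remaining work is purely algebraic. For each fixed $j$ the function $h_j=\sum_{i=1}^j A_i f_{i,j}$ is a finite sum with nonnegative weights $A_i$, so by linearity of $\Delta$ and requirement (ii) of Definition \ref{General-Case},
\be
\Delta h_j = \sum_{i=1}^j A_i\,\Delta f_{i,j} \le \sum_{i=1}^j A_i\, f_{i,j} = h_j.
\ee
By Lemma \ref{Lem-Tensor-Nondegenerate} we have $h_j\ge 2A_1>0$, so dividing by $h_j$ yields $\Delta h_j/h_j\le 1$ and therefore
\be
\scal_j = 2 - \frac{2\,\Delta h_j}{h_j} \ge 2 - 2 = 0
\ee
for every $j$, as claimed.

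The only genuinely substantive ingredient is the scalar curvature formula itself; everything afterward is the linearity of the Laplacian together with the two sign-and-positivity hypotheses $\Delta f_{i,j}\le f_{i,j}$ and $A_i\ge 0$. Consequently I expect the main (and essentially only) obstacle to be bookkeeping: pinning down the Laplacian sign convention so that the warped product formula reads with a minus sign in front of $2\Delta h_j/h_j$, which is precisely what makes requirement (ii) translate into $\scal_j\ge 0$. Since \cite{STW} already carried out this computation for a single warping function and the result is local in the warping factor, I would reproduce only the short radial verification and otherwise cite their proposition.
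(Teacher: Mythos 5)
Your proposal is correct and follows essentially the same route as the paper: the paper likewise invokes the warped product formula $\scal_j = 2 - 2\Delta h_j/h_j$ (citing a reference rather than rederiving it), observes that positivity of $h_j$ reduces the claim to $\Delta h_j \le h_j$, and closes by linearity of $\Delta$ together with requirement (ii) and $A_i \ge 0$. Your sign-convention check via $\Delta f_{0,b}=2$ is a sensible extra verification but not needed beyond what the cited formula provides.
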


\begin{proof}
Denote the scalar curvature of $\Sph^2\times_{h_j}\Sph^1$ as $\Scal_j$. By \cite{KK-compact} we have
\be
\Scal_j= 2-2\frac{\Delta h_j}{h_j}.
\ee
Here $\Delta$ is the Laplacian on the standard $\Sph^2$. Since $h_j>0$ for each $j$, the scalar curvature non-negative condition is equivalent to
\be
\Delta  h_j\leq h_j.
\ee
By part (ii) of Definition \ref{General-Case} for each $i,\ j$
\be
\Delta f_{i,j}\leq  f_{i,j}.
\ee
Moreover, since we chose $A_i \geq 0$ for each $i$, we have
\be
\Delta h_j=\sum_{i =1}^j A_i \Delta   f_{i,j}\leq \sum_{i =1}^j A_i f_{i,j}= h_j.
\ee
This finishes the proof.
\end{proof}

Before we prove the uniform volume upper bound, we prove the following lemma:
\bl\label{Lem-Vol}
For each $j$, for $\Sph^2\times_{h_j}\Sph^1$ in Definition \ref{General-Case}
we have
\be
\int_{\Sph^2} h_j d\vol_{\Sph^2} \leq K T,
\ee
where $T$ is the constant in Definition \ref{General-Case}. 
\el
\begin{proof}
By definition of $h_j$ we have
\be
 \int_{\Sph^2} h_j d\vol_{\Sph^2}=\sum_{i=1}^j A_i \int_{\Sph^2} f_{i,j} d\vol_{\Sph^2}.
\ee
By part (i) of Definition \ref{General-Case}, for each $i$ we have
 \be
 \int_{\Sph^2} f_{i,j} d\vol_{\Sph^2} \leq T.
 \ee
 Hence, 
 \be
 \int_{\Sph^2} h_j d\vol_{\Sph^2} \leq T\sum_{i=1}^j A_i \leq KT.
\ee
This finishes the proof.
\end{proof}

We prove both the volume upper bound and the volume lower bound in the following proposition:
\begin{prop}[Volume]\label{Prop-Vol}
For each $j$, the volume of $\Sph^2\times_{h_j}\Sph^1$ in Definition \ref{General-Case} is bounded below by $16\pi^2 A_1$ and is bounded above by $2\pi KT$.
\end{prop}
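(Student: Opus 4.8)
The plan is to reduce the volume of the warped product to a single integral over $\Sph^2$ of the warping function $h_j$, and then invoke the two bounds on $\int_{\Sph^2} h_j\, d\vol_{\Sph^2}$ that are already available from the preceding lemmas. First I would compute the Riemannian volume element of the metric $g_j = g_{\Sph^2} + h_j^2 g_{\Sph^1}$. Since this metric is block diagonal and the $\Sph^1$-direction is one-dimensional, we have $\det g_j = h_j^2 \det g_{\Sph^2}$, so the volume form factors as $dV_{g_j} = h_j\, d\vol_{\Sph^2}\, d\theta$, where $\theta$ is the standard coordinate on $\Sph^1$. Integrating out the $\Sph^1$-factor (of length $2\pi$) gives
\be
\vol(\Sph^2\times_{h_j}\Sph^1) = 2\pi \int_{\Sph^2} h_j\, d\vol_{\Sph^2}.
\ee

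For the upper bound, I would apply Lemma \ref{Lem-Vol} directly, which yields $\int_{\Sph^2} h_j\, d\vol_{\Sph^2} \leq KT$; multiplying by $2\pi$ gives $\vol(\Sph^2\times_{h_j}\Sph^1) \leq 2\pi KT$. For the lower bound, I would use the pointwise estimate $h_j \geq 2A_1$ from Lemma \ref{Lem-Tensor-Nondegenerate} together with $\vol_{\Sph^2}(\Sph^2) = 4\pi$ to get $\int_{\Sph^2} h_j\, d\vol_{\Sph^2} \geq 2A_1 \cdot 4\pi = 8\pi A_1$; multiplying by $2\pi$ gives $\vol(\Sph^2\times_{h_j}\Sph^1) \geq 16\pi^2 A_1$.

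There is no substantial obstacle here: both inequalities follow immediately once the volume is expressed as $2\pi\int_{\Sph^2} h_j\, d\vol_{\Sph^2}$. The only point requiring genuine care is the correct form of the warped-product volume element, namely that the warping function $h_j$ enters \emph{linearly} rather than quadratically, precisely because the fiber $\Sph^1$ is one-dimensional and contributes $\sqrt{h_j^2} = h_j$ to the square root of the determinant. Once this is verified, the remaining steps are a direct application of Lemma \ref{Lem-Vol} and Lemma \ref{Lem-Tensor-Nondegenerate}.
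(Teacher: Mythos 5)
Your proposal is correct and follows essentially the same route as the paper: both reduce the volume to $2\pi\int_{\Sph^2} h_j\, d\vol_{\Sph^2}$ via the warped-product volume form, then apply Lemma \ref{Lem-Vol} for the upper bound and the pointwise bound $h_j \geq 2A_1$ from Lemma \ref{Lem-Tensor-Nondegenerate} for the lower bound. Your write-up of the lower bound is in fact slightly more careful than the paper's, whose displayed inequality reads $\vol_j \geq 16\pi^2$ with the factor $A_1$ inadvertently dropped, while your chain $2\pi \cdot 2A_1 \cdot 4\pi = 16\pi^2 A_1$ gives exactly the stated constant.
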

\begin{proof}
    Since $g_j=g_{\Sph^2}+h_j^2 g_{\Sph^1}$, in the coordinate $\{r,\theta, \varphi\}$, we have
    \be
   g_j= \begin{pmatrix}
1 &0 &0\\
0& \sin^2 r &0\\
0 &0& h_j^2.
    \end{pmatrix}
    \ee
    Hence, the volume form is
    \be
    d\vol_j= h_j d\vol_{\Sph^2}d\vol_{\Sph^1}.
    \ee
Denote the volume of $\Sph^2\times_{h_j}\Sph^1$ as $\vol_j$ then we have
\begin{align}
\vol_j &=\int_{\Sph^2}\int_{\Sph^1} h_j d\vol_{\Sph^1}d\vol_{\Sph^2} \\
&=2\pi \int_{\Sph^2} h_j d\vol_{\Sph^2}.
\end{align}
Hence by Lemma \ref{Lem-Tensor-Nondegenerate}, we have
\be
\vol_j \geq 16\pi^2,
\ee
and by Lemma \ref{Lem-Vol}, we have
\be
\vol_j \leq 2\pi K T.
\ee
This finishes the proof.
\end{proof}

Next, we prove that the diameter in each one of the manifolds in Definition \ref{General-Case} is uniformly bounded above. We need the following lemma:
\bl[Lemma 2.12 from \cite{STW}]\label{Lem-Single-Diam}
Let $h:\Sph^2\to \R$ be a smooth, positive function. Given any $p_1=(r_1,\theta_1,\varphi_1)$ and $p_2=(r_2,\theta_2,\varphi_2)$ in the warped product space $\Sph^2\times_{h}\Sph^1=(\Sph^2\times \Sph^1, g_{\Sph^2}+h^2 g_{\Sph^1})$ with the distance function $d_h$. Then we have the inequality
\be
d_h((r_1,\theta_1,\varphi_1),(r_2,\theta_2,\varphi_2))\leq |r_1-r_2|+\sin(r_2) d_{\Sph^1}(\theta_1,\theta_2)+h(r_2,\theta_2) d_{\Sph^1}(\varphi_1,\varphi_2).
\ee
\el
\begin{proof}
Refer to Lemma 2.12 from \cite{STW}.
\end{proof}

\begin{prop}[Diameter]\label{Prop-Diam}
For each $j$, the diameter of $\Sph^2\times_{h_j}\Sph^1$ in Definition \ref{General-Case} is bounded above by 
\be
4\pi +2\pi KT,
\ee
where $K$ is as in Definition \ref{General-Case} and $T$ is as in Definition \ref{General-Case}.
\end{prop}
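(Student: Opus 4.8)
The plan is to combine the pointwise distance estimate of Lemma~\ref{Lem-Single-Diam} with the integral bound $\int_{\Sph^2} h_j\, d\vol_{\Sph^2} \le KT$ supplied by Lemma~\ref{Lem-Vol}. The essential difficulty is that the warping function $h_j$ carries no uniform pointwise upper bound (indeed it is designed to blow up near the marked points as $j\to\infty$), so the last term $h(r_2,\theta_2)\,d_{\Sph^1}(\varphi_1,\varphi_2)$ in Lemma~\ref{Lem-Single-Diam} cannot be controlled if $\varphi$ is adjusted at an arbitrary endpoint. The resolution is to route every competitor path through a single fixed point of $\Sph^2$ at which $h_j$ is small, performing all fiber travel only there.

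First I would locate a good basepoint. Since $\Sph^2$ is compact and $h_j$ is continuous, $h_j$ attains its minimum at some $x_*=(r_*,\theta_*)$. The mean value inequality gives
\[
\min_{\Sph^2} h_j \;\le\; \frac{1}{\vol(\Sph^2)}\int_{\Sph^2} h_j\, d\vol_{\Sph^2} \;=\; \frac{1}{4\pi}\int_{\Sph^2} h_j\, d\vol_{\Sph^2},
\]
which by Lemma~\ref{Lem-Vol} is at most $\tfrac{KT}{4\pi}\le KT$. Fixing any fiber value $\varphi_*$, set $p_*=(r_*,\theta_*,\varphi_*)$; this is the point at which all changes of the $\Sph^1$ coordinate will be made.

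Next I would estimate the distance from an arbitrary $p=(r,\theta,\varphi)$ to $p_*$. Applying Lemma~\ref{Lem-Single-Diam} with $p_1=p$ and $p_2=p_*$ (so that the $\sin$ and the $h$ factors are evaluated at $p_*$), and using $|r-r_*|\le\pi$, $\sin(r_*)\le 1$, $d_{\Sph^1}(\theta,\theta_*)\le\pi$, $d_{\Sph^1}(\varphi,\varphi_*)\le\pi$ together with $h_j(x_*)\le KT$, I obtain
\[
d_{h_j}(p,p_*)\;\le\;\pi+\pi+\pi\,KT\;=\;2\pi+\pi KT .
\]
For arbitrary $p_1,p_2$ the triangle inequality $d_{h_j}(p_1,p_2)\le d_{h_j}(p_1,p_*)+d_{h_j}(p_*,p_2)$ then doubles this estimate, yielding $\diam(\Sph^2\times_{h_j}\Sph^1)\le 4\pi+2\pi KT$ uniformly in $j$, as claimed.

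The only genuine obstacle is the one flagged at the outset, namely the absence of a uniform supremum bound on $h_j$; the averaging step is exactly what converts the volume-type control of Lemma~\ref{Lem-Vol} into a uniform choice of basepoint where fiber travel is cheap. Everything else is a direct substitution into Lemma~\ref{Lem-Single-Diam} followed by a single application of the triangle inequality.
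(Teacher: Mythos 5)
Your proposal is correct and follows essentially the same route as the paper's proof: both find a basepoint where $h_j \le KT$ (the paper deduces its existence directly from the integral bound of Lemma~\ref{Lem-Vol}, you take the minimum point via the mean value inequality, which is the same averaging argument), apply Lemma~\ref{Lem-Single-Diam} with the second argument at that basepoint so the warping factor is evaluated where it is small, and conclude by the triangle inequality. No gaps.
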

\begin{proof}
The proof is similar to Proposition 2.13 in \cite{STW}. By Lemma \ref{Lem-Vol}, for each $j$ there exists $y_j\in\Sph^2$ such that
\be
h_j(y_j)\leq K T.
\ee
Suppose in a fixed polar coordinate $\{r,\ \theta\}$ in $\Sph^2$ we have 
\be
y_j=(r_j,\theta_j).
\ee
In $\Sph^2\times_{h_j} \Sph^1$, in the coordinate $\{r,\ \theta\ \varphi\}$, define
\be
p=(r_j,\ \theta_j, \pi).
\ee
Here $\varphi=\pi$ is chosen randomly. 

Denote the distance function in $\Sph^2\times_{h_j}\Sph^1$ as $d_j$. For any $q\in \Sph^2\times_{h_j} \Sph^1$, by Lemma \ref{Lem-Single-Diam}, we have
\be\label{Eqn-djpq}
d_j(p,q)\leq 2\pi+\pi  KT,
\ee
because $|r(p)-r(q)|\leq \pi$, $\sin (r(p)) d_{\Sph^1}(\theta(p),\theta(q))\leq \pi$ and
\be
h_j(y_j)d_{\Sph^1}(\varphi_1,\varphi_2)\leq \pi  KT.
\ee
For any $q_1,\ q_2\in \Sph^2\times_{h_j}\Sph^1$, by the triangle inequality we have
\be
d_j(q_1,q_2)\leq d_j(q_1,p)+d_j(q_2,p).
\ee
Apply equation (\ref{Eqn-djpq}) twice, then we have
\be
d_j(q_1,q_2)\leq 4\pi +2\pi T.
\ee
\end{proof}

Combine all of the above, then we have a proof of Theorem \ref{MT}:
\begin{proof}[Proof of Theorem \ref{MT}]
The scalar curvature non-negative condition is proved in Proposition \ref{Prop-Scal}. The uniform volume upper bound and lower bound are proved in Proposition \ref{Prop-Vol} with $v=16\pi^2 A_1$ and $V=2\pi KT$, and the uniform diameter upper bound is proved in Proposition \ref{Prop-Diam} with $D=4\pi+2\pi KT$.
\end{proof}

\section{\bf Scalar Curvature in the Limit Space}\label{Sec-Scalar-Curvature}

In this section, we ask the question of how to characterize the scalar curvature in the limit space. As shown in Corollary \ref{cor-conv}, we can use the notion of distributional scalar curvature as defined in \cite{Lee-LeFloch} to characterize the scalar curvature in the limit space as a non-negative distribution.



There are several other possible ways to characterize scalar curvature in the limit space. For example, in \cite{B-G-GAFA}, Burkhardt-Guim defined the pointwise scalar curvature lower bounds for continuous Riemannian metrics. See also \cite{B-G-SIGMA-survey}.
\begin{question}
Can we start Ricci flow from a closed manifold with a $W^{1,p}$ Riemannian metric tensor when $p$ is small? For example, in $\Sph^2\times \Sph^1$, can we start a Ricci flow from a warped product Riemannian metric tensor that is $W^{1,p}$ for $p\in [1,2)$? If it is possible, can we follow \cite{B-G-GAFA} to define a scalar curvature lower bound for $W^{1,p}$ Riemannian metric tensor using Ricci flow? 
\end{question}
Here, one challenge is the low regularity for sequences in Definition \ref{General-Case}. By \cite{STW} and \cite{Tian-Wang}, we only have $W^{1,p}$ regularity for $1\leq p<2$ for the limit space. Another challenge is the size of the singular sets. The author is working with Christina Sormani to prove that for the extreme limit example in \cite{STW}, after metric completion, the singular set in the limit space has Hausdorff dimension $1$. As demonstrated in Example \ref{Example-Case2}, we may have dense singular sets with Hausdorff dimension $1$. The characterization of the scalar curvature lower bound using the notion of Ricci flow may be challenging when dealing with dense singular sets.

Another way to characterize the scalar curvature in the limit space is to use the volume-limit notion of the scalar curvature. We know that for a Riemannian manifold $M$, the scalar curvature at a point $p$ is equal to the limit
\be
\lim_{r\to 0} \left(\frac{\vol_{\R^3}(B(0,r))-\vol_{M}(B(p,r))}{r^2 \vol_{\R^3}(B(0,r))}\right).
\ee
Here, $B(0,r)$ denotes the geodesic ball centered at the origin with radius $r$ in the three-dimensional Euclidean space, whereas $B(p,r)$ denotes the geodesic ball centered at $p$ with radius $r$ in $M$. The notation $\vol_{\R^3}$ refers to the volume in the three-dimensional Euclidean space, while $\vol_M$ refers to the volume in $M$. We can apply this notion to Riemannian manifolds with low regularity or even metric spaces since it only uses the volume of geodesic balls. This notion is used in \cite{BDS-Sewing}, \cite{BS-seq}, and \cite{KX-Drawstring}.


%

We ask the following question
\begin{question}
Can we use the volume limit notion to show that the scalar curvature is non-negative for the limit space of the sequence in Definition \ref{General-Case}? Similarly, can we use the volume limit notion to show that the scalar curvature is non-negative for the limit space of general warped product $\Sph^2\times \Sph^1$ sequences as studied in \cite{Tian-Wang}?
\end{question}
The challenge here is that we don't have a good way to characterize the geodesic ball in the limit space of the sequences in Definition \ref{General-Case}. In \cite{BDS-Sewing}, \cite{BS-seq}, and \cite{KX-Drawstring}, the authors use the fact that the geodesic ball at the pulled point looks like a geodesic ball in lower dimensional manifolds. For details, see Lemma 6.3 of \cite{BDS-Sewing} or Example 4.2 of \cite{BS-seq}. For the sequence in \cite{STW}, we do not have a similar description of geodesic balls of the limit space. In Example \ref{Example-Case1},  Example \ref{Example-Case2}, and Example \ref{Example-Case3}, there are infinitely many poles in the limit space, it can be more challenging to use the volume limit notion to show that the scalar curvature is non-negative.

In \cite{Gromov-Dirac}, Gromov suggested that we can use Riemannian polyhedra to study scalar curvature comparison. Li \cite{Li-poly} proved a comparison theorem for three-dimensional Riemannian polyhedra with non-negative scalar curvature. We ask the following question.
\begin{question}
Can we construct Riemannian polyhedra in the limit space of Example \ref{Example-Case1}, Example \ref{Example-Case2}, and Example \ref{Example-Case3}? If possible, then, can we prove that these Riemannian polyhedra satisfiy the inequalities as in Theorem 1.4 of \cite{Li-poly} when compared with the corresponding polyhedra in the three-dimensional Euclidean space?
\end{question}

Here, the low regularity of the limit space can be difficult to deal with. By \cite{STW} and \cite{Tian-Wang}, we only expect the limit space to have $W^{1,p}$ regularity for $p\in [1,2)$. But Li's work \cite{Li-poly} only works for Riemannian metric tensor that is at least $C^0$. Another challenge is that it is difficult to find the geodesics in the limit space, and hence it is difficult to construct the polyhedra such that each edge is a geodesic.

\newpage

\bibliographystyle{plain}
\bibliography{main.bib}

\end{document}